   \newtheoremstyle{example}{\topsep}{\topsep}
     {}
     {}
     {\bfseries}
     {}
     {\newline}
     {\thmname{#1}\thmnumber{ #2}\thmnote{ #3}}
   \theoremstyle{example}
   \newtheoremstyle{conjecture}{\topsep}{\topsep}
     {}
     {}
     {\bfseries}
     {}
     {\newline}
     {\thmname{#1}\thmnumber{ #2}\thmnote{ #3}}
   \theoremstyle{conjecture}
   \newtheoremstyle{question}{\topsep}{\topsep}
     {}
     {}
     {\bfseries}
     {}
     {\newline}
     {\thmname{#1}\thmnumber{ #2}\thmnote{ #3}}
   \theoremstyle{question}
\newcommand{\nZ}{\mathbb Z}
\newcommand{\nCPP}{\mathbb {CP}^2}
\newcommand{\nCPB}{\overline{\mathbb {CP}}^2}
\newcommand{\toto}{T_o ^2 \times T_o ^2}
\newcommand{\seteq}{\stackrel{\text{\tiny def}}{=}}
\newcommand{\totopar}{T_o ^2 \times S^1 \cup S^1 \times T_o ^2}
\newcommand{\nRPP}{\mathbb {RP}^4}
\newcommand{\NT}{\nu_{TT}}
\newcommand{\SLIII}{SL(3; \nZ)}
\newcommand{\cS}{\mathcal S}
\newtheorem{thm}{\bf{Theorem}}[section]
\newtheorem{prop}{\bf{Proposition}}[section]
\newtheorem{rem}{Remark}
\newtheorem{conj}{Conjecture}[subsection]
\newtheorem{question}{Question}[subsection]
\newlength{\wid}
\newcommand{\myname}{Daniel Nash}
\title{New Homotopy 4-Spheres}
\author{\myname (dnash@renyi.hu)}
\begin{document}
\maketitle

\bibliographystyle{plain}

\begin{abstract} 
We use surgery along 2-tori embedded in a union of two copies of $\toto$ to produce a new collection of homotopy 4-spheres (4-manifolds homotopy equivalent to $S^4$ and hence homeomorphic to $S^4$ but possibly not diffeomorphic to $S^4$).  It is still unknown if these new examples are in fact exotic.  
\end{abstract}

\section{Introduction}
With the Poincar\'{e} Conjecture now established, the attention of many experts is shifting to the 4-dimensional smooth counterpart to the conjecture:
\begin{conj}[The Smooth Poincar\'{e} Conjecture in 4-D (SPC4)]\label{conj:SPC4} Let $M$ be a smooth 4-manifold homeomorphic to the 4-sphere $S^4$.  Then $M$ is diffeomorphic to  $S^4$.
\end{conj}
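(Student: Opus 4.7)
The statement is the Smooth Poincar\'e Conjecture in dimension four, which is a famously open problem; accordingly, any ``proof proposal'' must be understood as an outline of a program rather than a plan one expects to execute in a single paper. The overall strategy is to show that the class of smooth 4-manifolds homeomorphic to $S^4$ is a single smooth equivalence class under some set of moves known to preserve the diffeomorphism type, and then to show that every representative is related to the standard $S^4$ by a finite sequence of such moves. Since Freedman gives $M \cong S^4$ topologically, there is a handle decomposition of $M$, and the problem reduces to a purely combinatorial Kirby-calculus question: show that the attaching data for $M$ can be simplified by handle slides, creation/cancellation of complementary pairs, and (if necessary) stabilization by $S^2 \times S^2$ summands, down to the empty handle diagram.

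First I would reduce to handle decompositions without 1- or 3-handles, a reduction standard after Trotter and often achievable at the cost of stabilizations; then I would enumerate the presently known families of candidate counterexamples --- the Cappell--Shaneson spheres, Gluck twists on knotted 2-spheres, and the torus-surgery constructions such as those introduced in the present paper on $\toto \cup \toto$ --- and attempt a uniform trivialization. For Cappell--Shaneson spheres, work of Akbulut and Gompf has established standardness in many cases; the natural next step is to show that the dual torus surgeries performed inside $\toto$ can always be undone after handle slides across the two $\toto$ blocks, so that the homotopy 4-spheres built here are smoothly standard. A final induction on some complexity of the surgery data, if one can be formulated, would then close the argument for this family.

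The main obstacle, and the reason SPC4 has resisted attack, is that no smooth invariant is currently known to distinguish homotopy 4-spheres: Seiberg--Witten and Donaldson invariants are forced to vanish when $b_2 = 0$, and there is at present no Heegaard--Floer, Khovanov-skein, or gauge-theoretic analog sensitive to exoticness on $S^4$. Consequently every known approach must be constructive, exhibiting an explicit diffeomorphism, yet the catalog of candidate homotopy $S^4$'s is not obviously bounded by any finite list of building blocks --- the very point of the present paper is that this catalog is larger than previously recognized. Thus the hard part is not any single step but the absence of an \emph{a priori} reason to expect the collection of candidates to be manageable; a realistic near-term goal is to show that the new family produced here is smoothly standard, thereby removing these examples from the list of obstructions to SPC4 rather than proving the conjecture outright.
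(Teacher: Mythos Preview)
The statement is labeled and presented in the paper as a \emph{conjecture}, not a theorem: the paper offers no proof of SPC4, and in fact its entire point is to enlarge the list of potential counterexamples. So there is no ``paper's own proof'' against which to compare your proposal. You correctly recognize this in your first sentence, and you correctly frame what follows as a speculative program rather than a proof.

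That said, a few of your reductions are not as solid as you suggest. The claim that one can always reduce to a handle decomposition without 1- or 3-handles is itself open for homotopy 4-spheres (it is essentially the Andrews--Curtis and ``no 3-handles'' problems); attributing this to ``Trotter'' is not standard, and stabilization by $S^2 \times S^2$ changes the manifold, so it does not help with SPC4 directly. Your enumeration of candidate families is also not exhaustive, and there is no known mechanism guaranteeing that all homotopy 4-spheres fall into a finite list of such families --- you acknowledge this at the end, which is the honest assessment. In short: your write-up is a fair survey of why SPC4 is hard and what partial progress looks like, but it is not a proof, and the paper does not claim one either.
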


\noindent The persisting lack of any answer to SPC4 is probably in part due to the wild nature of smooth 4-manifolds in general, which --- even restricting our scope to the simply-connected setting --- have proven exceptionally formidable in terms of constructing any plenary classification scheme.
Still, of all simply-connected 4-manifolds, the 4-sphere continues to present perhaps the most elusive challenge when it comes to obtaining/finding exotic smooth structures.  
On the one hand, the literature abounds with \emph{potential} counterexamples to the conjecture; but on the other hand, not one example has yet been verified as exotic.
This is, it seems, largely due to the lack of any smooth invariant for $S^4$ (which other exotic 4-manifold constructions have relied upon).  

Historically, these exotic constructions of other simply-connected\\
4-manifolds have quite often made use of surgery along 2-tori (generalized logarithmic transformation).  This paper highlights the utility of torus surgery in conjunction with SPC4 and (once again) as a potential facet of the classification of smooth 4-manifolds in general.  In section \ref{sec:2} we lay out the background material needed to construct our examples.  Section \ref{sec:mainsec} comprises the heart of this work, the production of new homotopy 4-sphere examples (we do not however prove here that our examples are counterexamples to SPC4).  These constructions are inspired by an intriguing handlebody presentation of $S^4$ given by Fintushel-Stern \cite{FS10} and the role of surgery upon 2-tori embedded in $\toto$ as seen in Fintushel-Park-Stern's ``Reverse Engineering'' program \cite{FPS07}.

In section \ref{sec:bridge} we illustrate a further correspondence between SPC4 and surgery along embedded 2-tori in conjunction with the classic homotopy sphere examples of Cappell-Shaneson \cite{CS76b, CS76a} and the recent analysis of these examples by Gompf \cite{Go09}.  Specifically, their homotopy $S^4$'s can be viewed as the result of surgery along a circle in special mapping tori on $T^3$, later labelled $M_{\phi}$ and referred to as ``Cappell-Shaneson mapping tori''.  Gompf exhibited diffeomorphisms between all members of a certain family of homotopy spheres arising from $M_{\phi}$ manifolds by altering the monodromy via surgery along fishtail embedded 2-tori in the $T^3$ fiber.  

Our focus here is on the monodromy changing mechanics of torus surgeries.  
We exhibit a (perhaps) surprising connection between a sub-collection of our surgery manifolds produced from $\toto$ and the mapping tori $M_{\phi}$ of Cappell-Shaneson, but we also show that this approach does not directly imply the trivialization of our general homotopy sphere examples.

\section{Background Material}\label{sec:2}
\subsection{Definition: surgery along a torus}
Given a 4-manifold $M$ and a torus $T \subset M$ which has a trivial normal bundle $\nu T \subset M$,
a surgery (or generalized logarithmic transformation) along $T$ is the process of extracting the interior of a tubular neighborhood of $T$, and then regluing $T^2 \times D^2$ via some diffeomorphism $\delta$ of its boundary.  (The restriction on the normal bundle ensures $\nu T \approx T^2 \times D^2$.)  
Notice that the boundary of $T^2 \times D^2$ is $T^2 \times S^1 \approx T^3$, a three-torus; so diffeomorphisms of the boundary are elements of $GL(3, \nZ) \cong$ \emph{Diff}$(T^3)$.  The resulting manifold $M_{\delta, T}$ is given as:
$$M_{\delta, T} = (M \setminus \nu T) \cup_{\delta} T^2 \times D^2.$$
Due to the handlebody (see \cite{GS99, Ki89} for instance) structure of a trivial torus bundle $T^2 \times D^2 = h^0 \cup h_a ^1 \cup h_b ^1 \cup h^2$,

\ there is a unique way to attach the dualized 3- and 4- handles coming from $h_a ^1, h_b ^1, h^0$ to $(M \setminus \nu T) \cup h^2$.  Hence, the regluing map $\delta$ can be described by the attaching map of the 2-handle.  In terms of homology, this gluing of the 2-handle into the boundary of $(M \setminus \nu T)$ --- and the surgery itself --- depends on a choice of curves along the boundary.  Specifically, taking two loops $\left\{a', b' \right\}$ which generate $\pi_1(T)$ we push these in $\nu T$ out to loops $a$ and $b$ on the boundary of $M\setminus \nu T$.  If $\mu$ is the curve in $\nu T$ which bounds, then $B = \left\{\left[a\right], \left[b\right], \left[\mu\right] \right\}$ forms a basis for $H_1(\partial(M \setminus \nu T); \nZ) \cong H_1(T^3; \nZ) \cong \nZ^3$.  The surgery then can be defined by a linear combination in $B$ which gives the attaching curve for $\partial D^2$, the boundary of the attaching disk of $h^2 \approx D^2 \times D^2$.  In sum, the surgery map $\delta$ and the resulting manifold $M_{\delta, T}$ are given by (the choices $a$ and $b$ and):
\begin{align*}
\delta_* : H_1(T^2 \times \partial D^2; \nZ)& \longrightarrow H_1(\partial(M \setminus \nu T); \nZ), \ \text{which maps} \\
\delta_* : \left[ \partial D^2 \right]& \longmapsto p\left[\mu\right] +  q\left[a\right]  + r\left[b\right]
\end{align*}

Generally, one refers to the above as a $(p,q,r)$-surgery along $T$ with respect to $a,b$ or a degree $p$-surgery in the direction $qa + rb$.  (From now on we also denote both loops $a$ and their corresponding homology classes $\left[a\right]$ by simply ``$a$".)     For certain simpler situations (like those considered in this paper), one of the last two coefficients will be 0, and we will mimic the notation of Dehn surgery in 3-manifolds by calling this a ($\frac{p}{q}$)-surgery with $p$ the coefficient of the meridian.  

\subsection{``Reverse Engineering'' and torus surgeries of Fintushel-Stern}
Of particular import in this paper, is the approach of Fintushel and Stern \cite{FS10} (and earlier: Fintushel-Park-Stern \cite{FPS07}) in devising clever ways of discovering \emph{nullhomologous} tori embedded in standard 4-manifolds which are somehow linked to exotic smooth structures on these 4-manifolds. 

In \cite{FPS07} the authors define and implement their ``Reverse Engineering'' process whereby exotic smooth structures on small euler characteristic manifolds (e.g. $\nCPP \# n \nCPB, \ n\leq 8$) can be obtained.  In their description, a simply-connected manifold such as $M = \nCPP \# n \nCPB$ serves as the \emph{target} of the procedure, while a different non-simply-connected symplectic manifold--- the \emph{model} for $M$ --- is actually used as the starting point.  The above authors were able to produce an infinite collection of Seiberg-Witten invariant altering surgeries.

For the purposes of this paper, Taubes' result on Seiberg-Witten invariants \cite{Ta94} and its utilization as in \cite{FPS07} are not quite applicable.  On the other hand, this formulation of \emph{models} is indeed useful for our \emph{target}, $S^4$.

In fact, of the greatest use here is the Fintushel-Park-Stern model for a special target which is not a blow-up of $\nCPP$, the target $S^2 \times S^2$.  The model employed in \cite{FPS07} is a fiber sum along a genus two surface in two copies of $\Sigma_2 \times T^2$, that is $\Sigma_2 \times \Sigma_2$.  Also, if each genus-2 surface complement ($\Sigma_2 \times T^2 \setminus \Sigma_2 \times D^2$) is further decomposed as $\toto$ (a product of punctured 2-tori), then the (8-many) surgeries leading to a fake $S^2 \times S^2$ can be realized within the four individual $\toto$ copies.

\subsection{$\toto$}\label{sec:toto}
Now this decomposition of $\Sigma_2 \times \Sigma_2$ as a 4-fold union of $\toto$'s (equivalently, the complements of a the coordinate axes in copies of $T^4 = T^2 \times T^2$) suggests a key strategy for understanding exotic smooth structures and the related model manifolds might be to focus on this core building block $\toto$ itself.  Actually, Fintushel and Stern have arrived in this situation from an alternate starting point.  Pursuing useful nullhomologous tori embedded in standard 4-manifolds, Fintushel and Stern have exhibited a particular manifold-with-boundary (denoted by them as ``$A$") which itself contains a ``Bing double" of nullhomologous tori and upon which they build their models.  Figure \ref{fig:Amfld} below gives a handlebody diagram for $A$ which is equivalent to the one appearing in \cite{FS10}. 
			\begin{figure}[h]
			\begin{center}
			\includegraphics[width=8cm]{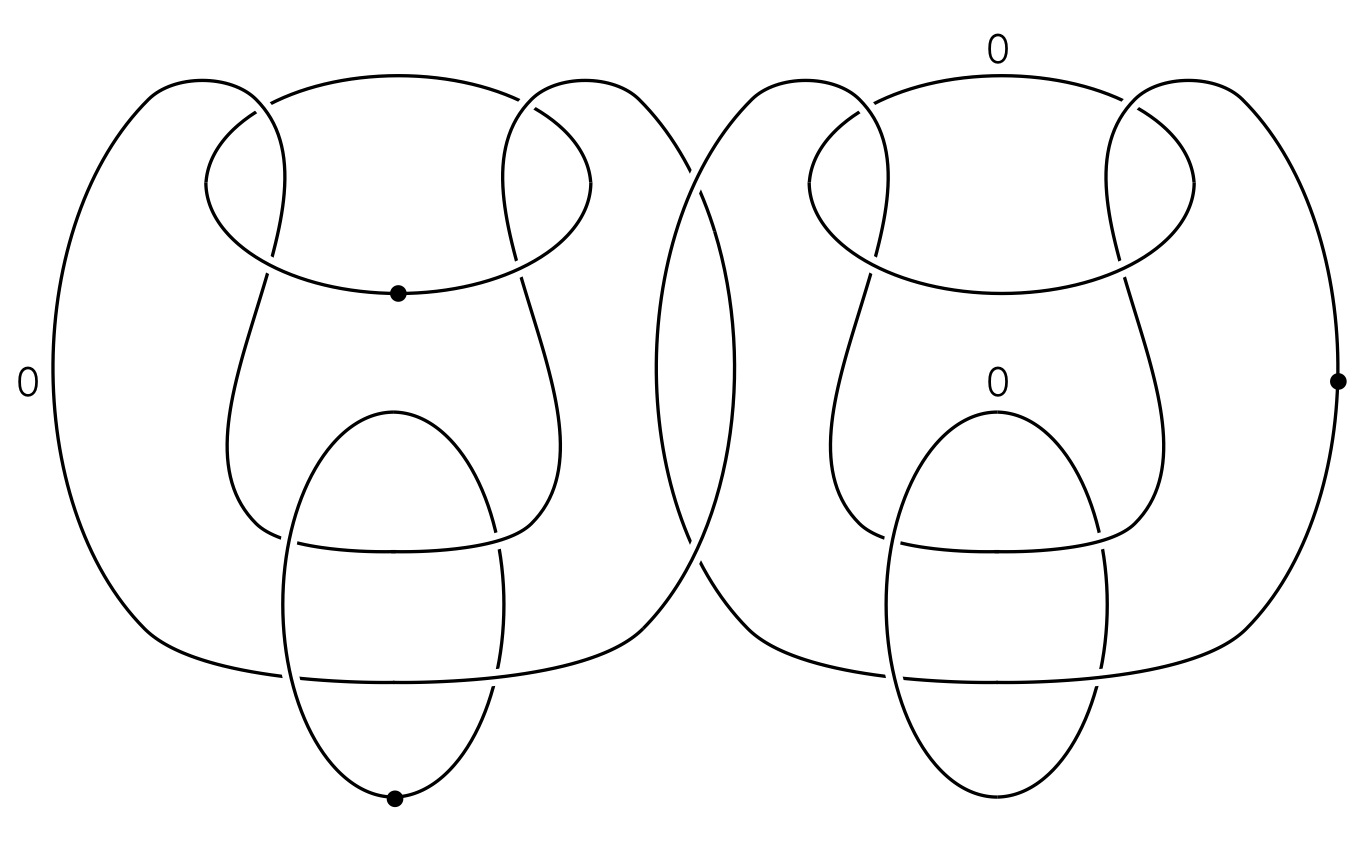}
			\caption{Fintushel and Stern's $A$ manifold}
			\label{fig:Amfld}
			\end{center}
			\end{figure}
Two key aspects concerning $A$ now become important for the emphasis of our work.  First, Fintushel-Stern have proven the following.  
(Let $B_T \seteq$ the pair of tori mentioned above, then:)
\begin{prop}[\cite{FS10}, see Proposition 2]\label{prop:FS}  The result of 0-framed surgery on the pair of tori $B_T \subset A$ is $\toto$. \hfill $\Box$
\end{prop}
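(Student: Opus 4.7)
The plan is to verify the proposition by explicit handle calculus starting from the diagram of $A$ in Figure \ref{fig:Amfld}. First I would translate a 0-framed torus surgery into handle moves: removing $\nu T_i \cong T^2 \times D^2$ erases the corresponding handles of $\nu T_i$ (one $h^0$, two $h^1$'s, one $h^2$), and regluing a new $T^2 \times D^2$ attaches a single 2-handle to the leftover boundary along the 0-framed longitude of $T_i$ (the pushoff on $\partial \nu T_i$ that bounds in the new solid piece), with the dual 1- and 0-handles of the new $T^2 \times D^2$ attaching uniquely as 3- and 4-handles. Thus at the level of Kirby diagrams, each 0-surgery on a torus amounts to replacing the handles representing $\nu T$ with a single 0-framed 2-handle along the designated longitude.

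With this dictionary in place, the core of the argument is a diagrammatic computation. I would carry out the two surgeries directly on the diagram of $A$, introducing the two new 2-handles and then simplifying via isotopy, 2-handle slides, and 1-2 handle cancellations. The target is the standard handle decomposition of $\toto$: since $T_o^2$ has a minimal decomposition $D^2 \cup h_a^1 \cup h_b^1$, the product carries the natural product structure with one 0-handle, four 1-handles (two from each factor), four 2-handles (one for each pairing $h^1 \times h^1$ across factors, with attaching circles reading off the obvious commutator-type relations among the 1-handle generators), and no 3- or 4-handles. The goal is to see these four product 2-handles emerge naturally from the surgery diagram once the Bing-double linking between the two tori has been converted into the meridional clasp pattern predicted by the product decomposition.

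The main obstacle I anticipate is the bookkeeping: tracking framings on each new 2-handle through the isotopies and cancellations needed to absorb the surgery handles into the original handles of $A$, and making sure the resulting attaching pattern matches the product structure on the nose rather than merely up to some further ambient diffeomorphism. As a sanity check along the way, I would verify algebraically that $\pi_1$ and the intersection form of the post-surgery manifold agree with those of $\toto$; in particular the surgeries must kill enough of $\pi_1(A)$ to yield the free $\pi_1 = F_2 \ast F_2$ of the product and must trivialize the intersection form on the relevant new classes.

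If executing the surgeries in this forward direction produces a diagram that resists immediate recognition as the product decomposition, I would fall back on the reverse direction: using the reversibility of torus surgery, identify the dual pair of tori inside $\toto$ that arise from the regluings, and verify instead that 0-surgery on them reproduces the diagram of $A$. Because $\toto$ has a transparent product handle structure, locating the dual tori and running the surgery there should be the more tractable direction, and its equivalence to the original statement is immediate from the involutive nature of 0-framed torus surgery with respect to the dual longitude.
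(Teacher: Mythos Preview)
The paper does not supply its own proof of this proposition: it is stated with an immediate $\Box$ and attributed to Fintushel--Stern \cite{FS10}, so there is nothing in this paper to compare your argument against. Your handle-calculus plan is a reasonable way to reconstruct the cited result, and the fallback of running the surgery in reverse from the transparent product handlebody of $\toto$ is indeed the cleaner direction.

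There is, however, a genuine error in your sanity check that would derail you if you relied on it: $\pi_1(\toto) = \pi_1(T_o^2) \times \pi_1(T_o^2) \cong F_2 \times F_2$, not the free product $F_2 * F_2$. The four product 2-handles you correctly enumerate (one for each $h^1 \times h^1$ across factors) impose precisely the commutation relations $[a,c]=[a,d]=[b,c]=[b,d]=1$ between the two free factors; the target presentation is $\langle a,b,c,d \mid [a,c],[a,d],[b,c],[b,d] \rangle$. If you carry out the Kirby moves expecting to see a free product with no 2-handle relations, you will either conclude the surgery has gone wrong or mis-identify the resulting diagram. With that correction in place your outline is sound.
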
			
\noindent Second, Fintushel and Stern have also made the following observation which is simple to check:  If $\varphi$ is the involution of $\partial A$ which flips the handlebody's boundary about a vertical line through the middle of the diagram above, then $A \cup_{\varphi} \overline{A} \approx S^4$.  Forming this union amounts to gluing in the second copy's 2-handles as 0-framed meridians to the first copy's \emph{1-handles} and then attaching the dualized 1-handles as 3-handles.  Essentially, one arrives at the handlebody of figure \ref{fig:AuA} union three 3-handles and a 4-handle.
			\begin{figure}[h]
			\begin{center}
			\includegraphics[width=8cm]{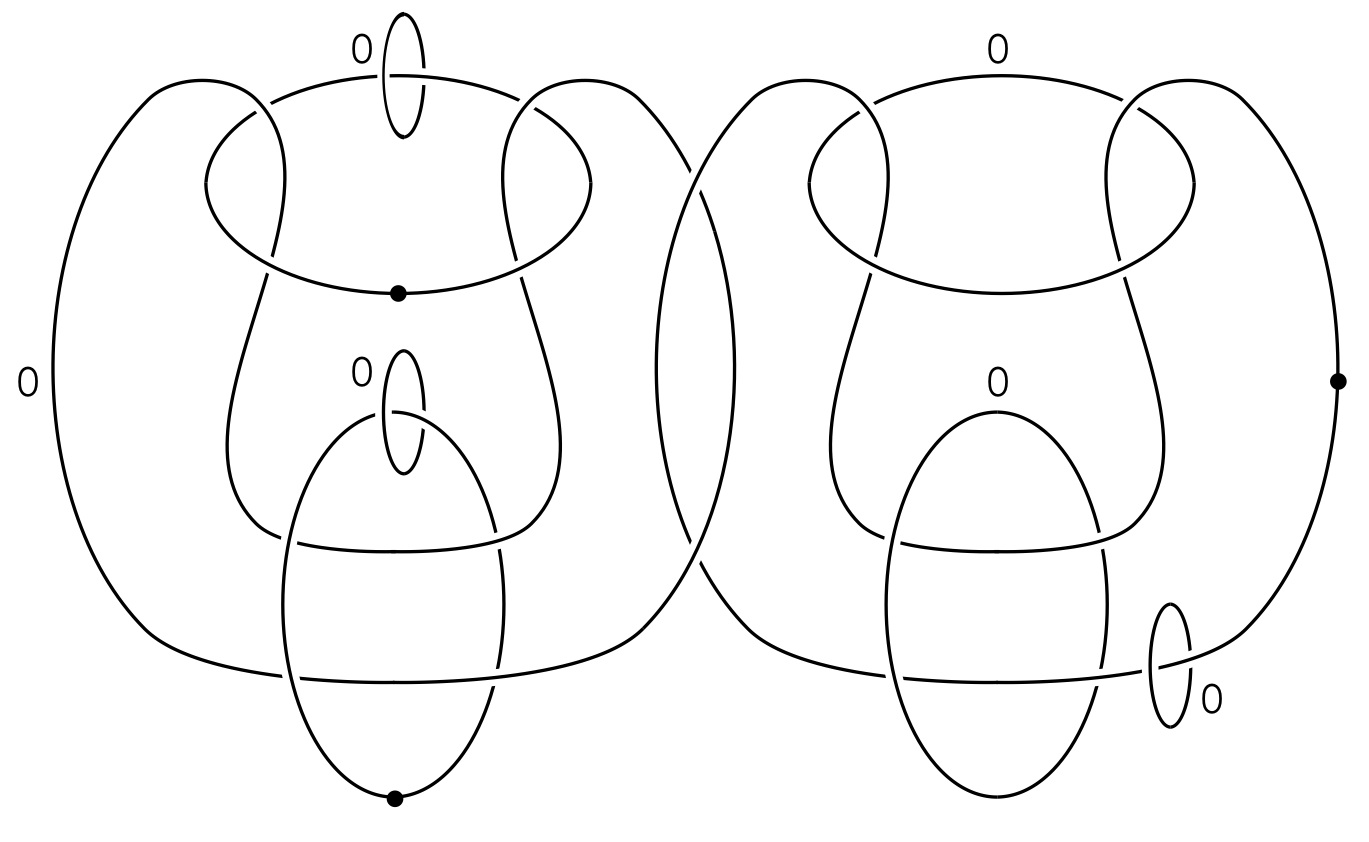}
			\caption{$A\cup_{\varphi} \overline{A} \ \setminus \ $ (3-handles, 4-handle)}
			\label{fig:AuA}
			\end{center}
			\end{figure}
			
After sliding 2-handles over these $0$-framed meridians and cancelling pairs of 1-,2-handles, the boundary is explicitly seen as $\#_3 S^1\times S^2$.  Thus, one can add back in the extra 3-handles, cancel the 2-,3-handle pairs, and add the 4-handle to obtain $S^4$.
These two observations above now give the connection between surgery on model manifolds and SPC4, and the way is paved for the main consideration of this section.  Overall, the above implies:
\begin{center} \textit{$S^4$ contains four nullhomologous tori, 0-framed surgery upon which yields $\toto \cup_{\varphi} \overline{\toto}$.}
\end{center}
Or dually, starting from the opposite direction: 
\begin{center}
\textit{$\toto \cup_{\varphi} \overline{\toto}$ contains four essential tori, surgery upon which  yields $S^4$}.
\end{center}

This leads one to consider whether \emph{other} surgeries upon tori in\\ $\toto \cup_{\varphi} \overline{\toto}$ will also produce $S^4$, or more importantly, whether there are surgeries that might possibly produce an \emph{exotic} $S^4$.  We exhibit below, surgeries which \textit{at least} produce a homotopy $S^4$ not a priori diffeomorphic to $A\cup_{\varphi} \overline{A}$.

\section{Homotopy 4-Spheres from $\toto$}\label{sec:mainsec}

\subsection{Constructing a New Homotopy 4-Sphere} 
To begin our construction, note that the boundary of $\toto$ 
is
	$$\partial(\toto) = \totopar, $$
where the two boundary terms are not disjoint but overlap in a torus.

In the following, we make use of the same convenient involution $\varphi$ which is a ``flip" along the entire boundary.  This can be formally defined by
\begin{align*}	
				\varphi : \totopar& \longrightarrow \totopar,\\
				&\varphi(x) = x^*, \ \
\end{align*}				
where for $x \in T_o ^2 \times S^1$, $x^*$ is the corresponding point of $S^1 \times T_o ^2$ and conversely.  Under this framework, we will prove the following result:

\begin{thm}[$\toto$ Surgery Theorem]\label{thm:totosurgery}
For $\varphi$ as above, there are two lagrangian tori in $\toto$ and a pair of lagrangian-framed surgeries such that the resulting surgery manifold $X'$ satisfies:
	$$X' \cup_{\varphi} \overline{X'} \cong S^4.$$
\end{thm}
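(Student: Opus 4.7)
The plan is to derive the statement directly from the Fintushel--Stern picture: $A \cup_{\varphi} \overline{A} \cong S^4$ together with Proposition \ref{prop:FS}, which realizes $\toto$ as $0$-framed surgery on a pair of tori $B_T \subset A$. Dualizing, $\toto$ itself contains two tori $T_1, T_2$ whose $0$-framed surgery returns $A$. My first task is to exhibit these $T_i$ explicitly and verify that they are lagrangian with respect to the product symplectic form $\omega_1 \wedge \omega_2$ on $\toto$. Reading off the Bing double picture of Figure \ref{fig:Amfld} and tracking it across the surgery of Proposition \ref{prop:FS}, each $T_i$ appears as a product of one standard circle in each $T_o^2$ factor, placed transverse to the punctures. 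Such product tori are manifestly lagrangian, and their Weinstein framing is the product framing induced by the complementary circles.

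Having fixed the lagrangian tori, I would define $X'$ by performing the lagrangian-framed surgery on $(T_1,T_2)$ that differs from the ``trivial'' $0$-framed surgery (the one returning $A$) by a unit shift in the meridian. In the $(p,q,r)$-coordinates of Section \ref{sec:2}, this amounts to attaching each dualized $2$-handle along the curve $\mu + a$, where $a$ is the lagrangian push-off of a generator. Thus $X'$ and $A$ share the same $1$-skeleton and differ only by this lagrangian twist in the attaching circles of their $2$-handles.

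Next I would assemble $X' \cup_{\varphi} \overline{X'}$ and mirror the handle-calculus strategy of Figures \ref{fig:Amfld} and \ref{fig:AuA}. The $\varphi$-gluing attaches $0$-framed meridians of each $1$-handle of $X'$ to the $1$-handles of $\overline{X'}$; sliding the surgery $2$-handles over these meridians allows cancellation of $1$--$2$-handle pairs exactly as in Fintushel--Stern's proof that $A \cup_{\varphi} \overline{A} \cong S^4$. The essential new input is that the lagrangian twist inserted on the $X'$ side is, under $\varphi$, identified with the inverse twist on the $\overline{X'}$ side (since $\varphi$ swaps the two $T_o^2$ factors and the bar reverses orientation). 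Pushing this twist across the gluing cancels it against its mirror, reducing the diagram to the same $\#_3(S^1 \times S^2)$ boundary that appears in the $A \cup_{\varphi} \overline{A}$ computation; adding back the $3$-handles and $4$-handle then gives $S^4$.

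The main obstacle is this final bookkeeping step: one must carefully track how the lagrangian framing data transform under $\varphi$ and verify that the twists genuinely cancel rather than compound. The cleanest approach is to realize the lagrangian-vs.-product framing difference as a handle slide along a $0$-framed meridian inherited from the $\varphi$-gluing, so that the discrepancy between $X'$ and $A$ is absorbed before the cancelling handle pairs are removed; from that point onward the argument coincides verbatim with Fintushel--Stern's identification of $A \cup_\varphi \overline{A}$ with $S^4$.
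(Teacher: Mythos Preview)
Your proposal aims at a \emph{diffeomorphism} via Kirby calculus, whereas the paper establishes only a \emph{homeomorphism}: it specifies the lagrangian tori $T_{ac}=a_1'\times c_1'$ and $T_{bc}=b_1'\times c_1''$, performs the $(-1)$-surgeries in directions $a_1',b_1'$, computes $\pi_1(X'\cup_\varphi\overline{X'})=1$ from the Baldridge--Kirk relations for the torus complement, checks $\chi=2$, and invokes Freedman. A Kirby-calculus diffeomorphism would prove strictly more than Theorem~\ref{thm:totosurgery} asserts---indeed it would answer the paper's closing Question and show all the $\cS_{(m,n,m',n')}$ are standard, which is exactly what the paper does \emph{not} claim to know.

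The concrete gap is your cancellation step. You assert that the lagrangian twist on the $X'$ side is carried by $\varphi$ (plus orientation reversal) to the inverse twist on the $\overline{X'}$ side, so that a handle slide over a $0$-framed meridian absorbs the discrepancy with $A$. But $\varphi$ swaps the two $T_o^2$ factors, inducing on the boundary the identifications $a_1\sim c_2$, $b_1\sim d_2$, $c_1\sim a_2$, $d_1\sim b_2$. The first copy is surgered along $a_1'\times c_1'$ in direction $a_1'$; under the $\varphi$-dictionary that direction corresponds to $c_2'$, while the actual surgery on the second copy is in direction $a_2'$---a \emph{transverse} curve on the same torus. The two twists are thus not mirrors of each other and no slide over the $\varphi$-meridians cancels them. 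This asymmetry is not a bookkeeping nuisance but the engine of the paper's proof: it is precisely what produces the interlocking relations \eqref{rel1}--\eqref{rel5} that kill all generators of $\pi_1$. Note also that Proposition~\ref{prop:surgRecast} gives $X'=X_{-1,-1}\not\cong A$, so the hoped-for reduction to the Fintushel--Stern double cannot occur at the level of the pieces; whether it occurs after gluing is the open problem the paper poses.
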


\begin{proof}\ref{thm:totosurgery}
The surgeries in view here are actually performed identically in both copies.  For homotopy calculations we appeal to the results of Baldridge and Kirk \cite{BK08}, essentially surgering the same pair of tori depicted in their calculation.  In order to guarantee the effects of surgeries on $\pi_1$, we also utilize a slightly careful description of the torus surgeries.
Here label the $\pi_1$-generating loops passing through the basepoint $(x,y)$ in the $i^{th}$ punctured torus product by $a_i, b_i$ from one punctured torus factor and $c_i, d_i$ from the other.  Similar to \cite{FPS07} label lagrangian push-offs of these loops by ``primes" as in figure \ref{fig:totosquares}.
			\begin{figure}[h]
			\begin{center}
			\includegraphics[width=12cm]{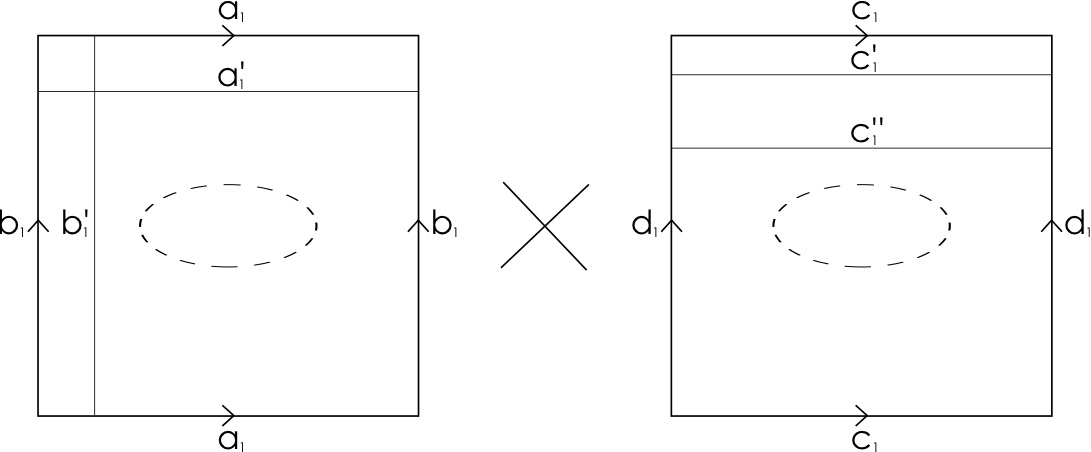}
			\caption{$\toto$ and lagrangian tori}
			\label{fig:totosquares}
			\end{center}
			\end{figure}

Set $T_{ac} = a_1' \times c_1'$ and $T_{bc} = b_1' \times c_1''$.  Then as in Baldridge-Kirk \cite{BK08} ``Theorem 2", the complement of the two tori in $\toto$ has fundamental group generated by $a_1, b_1, c_1, d_1$ with several relations.  These include:
	\begin{align}
		[a_1, c_1] &= 1\\
		[b_1, c_1] &= 1. \label{rel:b1c1}
	\end{align}
In the notation of ``Reverse Engineering" \cite{FPS07} the surgery tori, directions, and coefficients selected here are of the form  (torus, direction, coefficient).
After regluing the two tori along these surgery curves, we have new $\pi_1$ relations (again by ``Theorem 2'',  \cite{BK08}):
\begin{align*}
&\text{Surgery} \quad	 &\text{New\ \ } \pi_1 \text{\ \ relation}\\
&(a_1' \times c_1', \ a_1 ', -1) & [b_1 ^{-1}, d_1^{-1}] = a_1\\
&(b_1' \times c_1'', \ b_1', -1) &  [a_1 ^{-1}, d_1] = b_1\\
\end{align*}

Now to continue the proof of the theorem, we need:
\begin{prop}\label{prop:totoboundary}
Each of the loops $a_1,b_1,c_1,d_1$ are based homotopic to a corresponding loop on the boundary of $\toto$, in the complement of tori $\toto \setminus T_{ac} \cup T_{bc}$. 
\end{prop}
\begin{proof}\ref{prop:totoboundary}  For $a_1 ' \times {y}, \ b_1 ' \times {y}$, etc. push the corresponding point $x$ or $y$ along a straight linear path to $x'$ or $y'$.  This can be done in such a way that $\nu T_{ac}$ and $\nu T_{bc}$ are avoided as in figure \ref{fig:totopath}.
			\begin{figure}[h]
			\begin{center}
			\includegraphics[width=12cm]{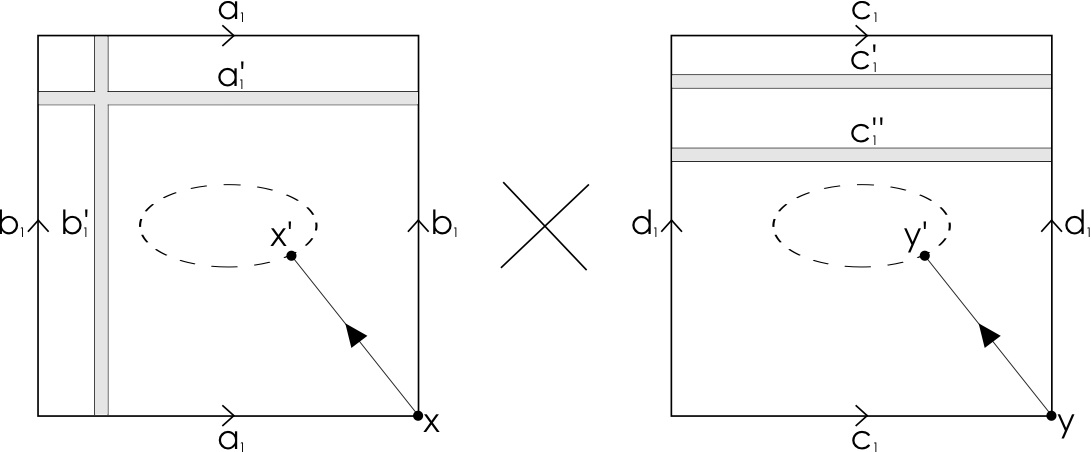}
			\caption{Paths from basepoint $(x,y)$ to the puncture}
			\label{fig:totopath}
			\end{center}
			\end{figure}

\end{proof}
With this proposition, in $\pi_1$ of the union of surgery manifolds $X' \cup_{\varphi} \overline{X'}$ we have the relations $a_1 \sim c_2$, $b_1 \sim d_2$,  $a_2 \sim c_1$, and $b_2 \sim d_1$.  Hence, applying the equivalent of  \eqref{rel:b1c1} to the second copy we also obtain
		\begin{align}
		[b_2, c_2] = [d_1, a_1] = 1\label{rel1}
		\end{align}
and from the first and second pair of surgeries
		\begin{align}
		 [b_1 ^{-1}, d_1^{-1}] &= a_1\label{rel2}  \\
		 [a_1 ^{-1}, d_1] &= b_1 \label{rel3} \\
		[b_2 ^{-1}, d_2^{-1}] = a_2 = c_1 &= [d_1 ^{-1}, b_1 ^{-1}] \label{rel4} \\
		 [a_2 ^{-1}, d_2] = b_2 = d_1 &= [c_1 ^{-1}, b_1] . \label{rel5}
		\end{align}

Using \eqref{rel1} together with \eqref{rel3} implies $b_1 = 1$, and then \eqref{rel2} and \eqref{rel4} in turn give $a_1, c_1 = 1$.  Finally, \eqref{rel5} gives $d_1 = 1$. 
After the four surgeries in the union $\toto \cup_{\varphi} \overline{\toto}$, we obtain the simply-connected manifold $\cS'  \seteq X'  \cup_{\varphi} \overline{X'}$.  Since $\cS'$ also has $\chi = 2$, it is therefore homeomorphic to $S^4$.  \hfill \emph{end proof theorem \ref{thm:totosurgery}} \end{proof}

\subsection{Families of Homotopy $S^4$'s}\label{sec:families}
By the choice of surgeries (in fact \emph{either} of the $-1$ or $+1$ surgeries works so that $X'$ as depicted above is only one such possible choice of surgery manifolds; it is not yet known whether these are pairwise diffeomorphic).
\ \ However any such $X'$ is distinct from $A$ (see section \ref{sec:bridge}), hence it is not obvious that $\cS'$ is standard $S^4$.  Now if one is willing to sacrifice the benefit of having a symplectic surgery manifold like $X'$, allowing a greater freedom in surgeries can still yield a homotopy $S^4$.

\begin{thm}[The Main Theorem]\label{prop:Surgeries} For $m,n \in \nZ$, Let $X_{m,n}$ denote the result of performing the ($\frac{m}{1}$)- and ($\frac{n}{1}$)-surgeries on $T_{ac}, T_{bc}$ and in the directions $a, b$ respectively.  Then the 4-manifold
$$\cS_{(m,n,m',n')} \seteq X_{m,n} \cup_{\varphi} \overline{X_{m',n'}}$$
is homeomorphic to $S^4$ for all $m,n,m',n' \in \nZ$.
\end{thm}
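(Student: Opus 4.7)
The plan is to re-run the argument of Theorem \ref{thm:totosurgery} verbatim, tracking where the specific surgery coefficients $-1$ enter and observing that the trivialization of $\pi_1$ is in fact \emph{coefficient-independent}. The first task is to write down the $\pi_1$-relation contributed by a general $(\tfrac{m}{1})$-surgery on $T_{ac}$ in direction $a_1$. By the convention of Section \ref{sec:2}, the attaching circle of the surgery $2$-handle represents $m[\mu_{T_{ac}}] + [a_1']$; combined with the Baldridge--Kirk presentation in which $\mu_{T_{ac}}$ appears in the complement as the commutator $[b_1^{-1}, d_1^{-1}]$, this yields
$$a_1 = [b_1^{-1}, d_1^{-1}]^{-m}.$$
Identical calculations for the other three surgeries produce
$$b_1 = [a_1^{-1}, d_1]^{-n}, \qquad a_2 = [b_2^{-1}, d_2^{-1}]^{-m'}, \qquad b_2 = [a_2^{-1}, d_2]^{-n'}.$$
All other data --- the complement relations $[a_i, c_i] = [b_i, c_i] = 1$, the basepoint homotopies of Proposition \ref{prop:totoboundary}, and the $\varphi$-identifications $a_1\!\sim\!c_2$, $b_1\!\sim\!d_2$, $a_2\!\sim\!c_1$, $b_2\!\sim\!d_1$ --- are unchanged from the $(-1,-1,-1,-1)$ case.

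Next I would repeat the short cascade at the end of the proof of Theorem \ref{thm:totosurgery}. The relation $[b_2, c_2] = 1$ in the second copy, transported across $\varphi$, still yields $[d_1, a_1] = 1$ in $\pi_1(\cS_{(m,n,m',n')})$. Feeding this into $b_1 = [a_1^{-1}, d_1]^{-n}$ collapses the right-hand side to the identity \emph{for every $n$}, so $b_1 = 1$. Substituting into $a_1 = [b_1^{-1}, d_1^{-1}]^{-m}$ then forces $a_1 = 1$ independently of $m$, and the symmetric step in the second copy (using $d_2 \sim b_1 = 1$) kills both commutators $[b_2^{-1}, d_2^{-1}]$ and $[a_2^{-1}, d_2]$, giving $c_1 = a_2 = 1$ and $d_1 = b_2 = 1$ regardless of $m', n'$. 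Hence $\pi_1(\cS_{(m,n,m',n')})$ is trivial.

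Torus surgery preserves Euler characteristic, so $\chi(\cS_{(m,n,m',n')}) = \chi(\toto \cup_\varphi \overline{\toto}) = 2$; simple-connectivity then forces the second Betti number to vanish and the intersection form to be trivial. Freedman's classification of simply-connected topological $4$-manifolds delivers the claimed homeomorphism $\cS_{(m,n,m',n')} \cong S^4$. The only delicate point is the first step: one must verify that for every $m,n,m',n' \in \nZ$ each surgery relation takes the schematic form ``(generator) $=$ (commutator)$^{\text{integer}}$'', with the commutator built from precisely those generators that get killed earlier in the cascade. This structural feature --- and not the particular values of the exponents --- is what makes the trivialization go through; once it is in place, the rest of the argument is a direct replay of Theorem \ref{thm:totosurgery}.
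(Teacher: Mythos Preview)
Your proposal is correct and matches the paper's own proof, which consists of the single remark ``replace the relations such as \eqref{rel3} above with $[a_1^{-1},d_1]^n = b_1$, etc.; again this gives $b_1=1$ and in turn all three of the other generators are trivial as before.'' You have simply written out this cascade in full and made explicit the coefficient-independence that the paper leaves implicit; the minor sign discrepancy in your exponents versus the paper's is immaterial since the relevant commutators are shown to be trivial before the exponent ever matters.
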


\begin{proof}\ref{prop:Surgeries} Replace the relations such as (\ref{rel3}) above with $[a_1 ^{-1}, d_1]^n = b_1$, etc.  Again this gives $b_1=1$ and in turn all three of the other generators are trivial as before. 
\end{proof}

\begin{rem}In the following section we will see that even the non-symplectic surgery manifolds $X_{m,n}$ above, when also both $m,n \neq 0$, are distinct from $A$, for slightly more subtle reasons (see proposition \ref{prop:surgRecast}).
\end{rem}

Furthermore, we have described these surgeries and surgery coefficients from the starting point and point of view of  $\toto \cup_{\varphi} \overline{\toto}$.  However, our specific pairs of tori: $a_i \times c_i, \ b_i \times c_i$ were precisely those producing $A$ (see \cite{FS10}) as well, and if $W_T$ is a tubular neighborhood of the union of the four surgery tori, then:
$$\toto \cup_{\varphi} \overline{\toto} \setminus W_T =  A\cup_{\varphi} \overline{A} \setminus W_T = S^4 \setminus W_T.$$
Hence, some regluing of four tori embedded in $S^4$ gives the manifold $\cS_{(m,n,m',n')}$, and this proves:

\begin{prop}  There is a four-parameter family of homotopy four-spheres, $\cS_{(m,n,m',n')}$, obtained by surgery along nullhomologous tori in $S^4$. \hfill $\Box$
\end{prop}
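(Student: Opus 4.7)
The plan is to identify $\cS_{(m,n,m',n')}$ as the outcome of a torus surgery whose starting point is $S^4$ itself, rather than the composite $\toto \cup_{\varphi} \overline{\toto}$. The starting observation is the Fintushel--Stern fact already recalled in section \ref{sec:2}, namely $A \cup_{\varphi} \overline{A} \approx S^4$, together with Proposition \ref{prop:FS}, which says that $0$-framed surgery on the Bing-doubled pair $B_T \subset A$ returns $\toto$. Doing this on both copies simultaneously identifies four disjoint tori inside $S^4$ whose $0$-framed surgery converts $S^4 = A \cup_\varphi \overline{A}$ into $\toto \cup_\varphi \overline{\toto}$.

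Next, I would invoke the key identification that is highlighted in the paragraph preceding the proposition: the four tori arising this way from $B_T \subset A$ and its mirror in $\overline{A}$ are exactly the four surgery tori $T_{ac}, T_{bc}$ (and their counterparts in the second copy) used in Theorem \ref{prop:Surgeries}. Denoting a tubular neighborhood of this quartet by $W_T$, one then has the crucial equality
\[
\toto \cup_{\varphi} \overline{\toto} \setminus W_T \;=\; A \cup_{\varphi} \overline{A} \setminus W_T \;=\; S^4 \setminus W_T.
\]
Because the complement of $W_T$ is the same in all three ambient $4$-manifolds, any reassembly of $\cS_{(m,n,m',n')}$ from $X_{m,n}$ and $\overline{X_{m',n'}}$ can equally well be described as removing the same $W_T$ from $S^4$ and regluing four copies of $T^2 \times D^2$ by the prescribed maps. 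That is exactly the definition of four torus surgeries carried out inside $S^4$.

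Nullhomology is then automatic: since $H_2(S^4;\nZ)=0$, every embedded torus in $S^4$ is nullhomologous, and in particular the four tori in the quartet are. The four-parameter freedom is visible directly from the four integer surgery coefficients $(m,n,m',n')$ of Theorem \ref{prop:Surgeries}, each of which corresponds to an independent choice of regluing diffeomorphism at one of the four tori.

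The only place where care is required is in the identification step: one must check that the tori produced by reversing Proposition \ref{prop:FS} on each of $A$ and $\overline{A}$ are indeed carried onto the four surgery tori $T_{ac}, T_{bc}$ used in the $\toto$-picture of section \ref{sec:mainsec}, and that the isotopy from $S^4 \setminus W_T$ to $\toto \cup_\varphi \overline{\toto} \setminus W_T$ respects the meridian/longitude framings used to define $X_{m,n}$. This is the one potential obstacle, but it is entirely a matter of unwinding the common identification of surgery tori already fixed in section \ref{sec:2} and section \ref{sec:mainsec}; no new geometric input is needed, so the proof reduces to a short paragraph collecting these observations.
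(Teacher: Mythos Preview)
Your proposal is correct and follows essentially the same route as the paper: the paper's proof is exactly the displayed equality
\[
\toto \cup_{\varphi} \overline{\toto} \setminus W_T \;=\; A\cup_{\varphi} \overline{A} \setminus W_T \;=\; S^4 \setminus W_T,
\]
justified by noting that the pairs $a_i\times c_i,\ b_i\times c_i$ are precisely the Fintushel--Stern tori producing $A$, so that regluing the four $T^2\times D^2$'s can equally well be viewed as surgery inside $S^4$. Your additional remark that nullhomology is automatic from $H_2(S^4)=0$ and your explicit caution about matching framings are reasonable elaborations, but the underlying argument is the same.
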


\section{Further Analysis and Final Remarks}\label{sec:bridge}
Now of course $\toto$ is nothing other than the 4-torus, viewed as $T^2 \times T^2$, with its coordinate axis 2-tori ($\NT \seteq \nu(S^1 _a \times S^1 _b  \cup S^1 _c \times S^1 _d)$) deleted.   Recombining the surgery manifolds $X_{m,n}$ with $\NT$ then gives the result of performing the same pair of surgeries in $T^4$.  Two consequences emerge from this.  First:
\begin{prop}\label{prop:surgRecast} The Fintushel-Stern manifold $A$ and the surgery manifolds $X_{m,n}$ satisfy:
\begin{enumerate}
	\item $A = X_{m,0} = X_{0,n}, \  \forall m,n \in \nZ$
	\item $X_{m,n} \ncong A$ if both $m, n \in \nZ \neq 0$.
\end{enumerate}
\end{prop}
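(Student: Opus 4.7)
My approach throughout is to use the $T^4$-recombination observation from just above: gluing $\NT$ back turns $X_{m,n}$ into the manifold produced from $T^4$ by the same pair of torus surgeries on $T_{ac}$ and $T_{bc}$, so both parts of the proposition become statements about surgery on $T^4$.

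For part (1), I would first pin down the base case $A = X_{0,0}$ by reversing Proposition \ref{prop:FS}: the $0$-framed surgery on the Bing tori $B_T \subset A$ produces $\toto$, and the inverse torus surgery recovers $A$ from $\toto$ via a specific pair of surgeries that, following Fintushel--Stern \cite{FS10}, one identifies with the $(\tfrac{0}{1})$-surgeries on $T_{ac}$ and $T_{bc}$ in directions $a_1$ and $b_1$. To upgrade this to $X_{m,0} = A$ for general $m$, the key point is that the $(\tfrac{0}{1})$-surgery on $T_{bc}$ in direction $b_1$ attaches a new $2$-handle along $b_1$, killing $b_1$ in $\pi_1$ and thereby trivializing the meridian class $\mu_{ac}$ of $T_{ac}$ (which by Baldridge--Kirk is represented by the commutator $[b_1, d_1]$). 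This trivialization supplies a disc for $\mu_{ac}$ across which one isotopes the attaching curve $a_1 + m\mu_{ac}$ back to $a_1$, producing a diffeomorphism $X_{m,0} = X_{0,0}$. The case $X_{0,n} = A$ is symmetric.

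For part (2), I would distinguish $X_{m,n}$ from $A$ via the fundamental group, computed using the Baldridge--Kirk method exactly as in the proof of Theorem \ref{thm:totosurgery}. The $(m,n)$-generalization of relations \eqref{rel2} and \eqref{rel3} (noted in the proof of Theorem \ref{prop:Surgeries}) reads $a_1 = [b_1^{-1}, d_1^{-1}]^m$ and $b_1 = [a_1^{-1}, d_1]^n$. When either exponent vanishes, both $a_1$ and $b_1$ cascade to $1$, recovering $\pi_1(A) = \pi_1(X_{0,0})$. When both $m,n \neq 0$, however, the element $a_1$ descends to a non-trivial class in the $2$-step nilpotent quotient $\pi_1 / [\pi_1,[\pi_1,\pi_1]]$, where it is identified (up to sign) with $m$ times the commutator class $[b_1, d_1]$; in $\pi_1(A)$ this class is zero. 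This nilpotent invariant distinguishes the two groups, and hence the two manifolds.

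The main obstacles are two: in part (1), the step of making rigorous the handle slide or isotopy that absorbs the meridianal twist, so that one actually produces a diffeomorphism of the surgery manifolds and not merely a homotopy equivalence --- I expect to carry this out via an explicit Kirby-diagram argument in the spirit of the handle cancellations in Section \ref{sec:toto} that exhibit $A \cup_\varphi \overline{A} \approx S^4$; and in part (2), verifying that $[b_1, d_1]$ in fact survives as a non-trivial element in the $2$-nilpotent quotient of the pre-surgery Baldridge--Kirk group, a routine but necessary finite computation that rules out any accidental collapse from the commutation relations already present in $\pi_1(\toto\setminus(T_{ac}\cup T_{bc}))$.
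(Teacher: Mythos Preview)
Your overall strategy differs markedly from the paper's. Rather than computing $\pi_1$, the paper exploits the fact that both surgery gluings act trivially on the $c$-circle: the pair of torus surgeries in $T^4 = \toto \cup \NT$ therefore descends to a pair of ordinary Dehn surgeries in the $T^3$-factor of the splitting $T^4 = (S^1_a \times S^1_b \times S^1_d)\times S^1_c$. Writing $Y_{m,n}$ for the resulting $3$-manifold, one has $X_{m,n}\cup\NT = Y_{m,n}\times S^1$, and a short Kirby computation on the Borromean-rings description of $T^3$ shows $Y_{m,0}=Y_{0,n}=S^2\times S^1$ (recovering $A$ after excising $\NT$, via \cite{FS10} Lemma~1) while $Y_{m,n}\ncong S^2\times S^1$ whenever both $m,n\neq 0$. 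This handles both assertions uniformly and sidesteps the embedded-disk issue you correctly flag as an obstacle in part~(1).

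Your argument for part~(2), however, has a genuine gap. You assert that $a_1$ survives in the $2$-step nilpotent quotient $\pi_1(X_{m,n})/\gamma_3$ (where $\gamma_3=[\pi_1,[\pi_1,\pi_1]]$) as a nonzero multiple of the class $[b_1,d_1]$, and propose to confirm non-triviality of $[b_1,d_1]$ in the \emph{pre-surgery} Baldridge--Kirk group. But the check must be made in the \emph{post-surgery} group $\pi_1(X_{m,n})$, and there the companion surgery relation $b_1=[a_1^{-1},d_1]^n$ already forces $b_1\in\gamma_2=[\pi_1,\pi_1]$, whence $[b_1,d_1]\in[\gamma_2,\pi_1]=\gamma_3$ --- precisely the subgroup being killed. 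Feeding this back into $a_1=[b_1^{-1},d_1^{-1}]^m$ and iterating shows $a_1,b_1\in\gamma_k$ for every $k$, so \emph{all} nilpotent quotients of $\pi_1(X_{m,n})$ coincide with those of $\pi_1(A)$. Nilpotent invariants therefore cannot separate these manifolds; one needs either a genuinely non-nilpotent $\pi_1$ argument (and it is not clear one is available) or a different invariant altogether --- which is exactly what the paper's $3$-manifold reduction supplies.
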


\begin{proof}  We prove the above by recasting the pair of surgeries in $T^4 = \toto \cup \NT$ as surgeries in $T^4 = T^3 \times S^1$.  A similar trick was used already by Akhmedov-Baykur-Park in \cite{ABP08}.  In particular, note that regluings of both torus surgeries $(a' \times c', \ a ', m)$ and $(b' \times c'', \ b', n)$ are trivial on the $c$-factor, that is, the surgery maps are equivalent to 
$$(\text{Dehn-surgery on a loop in $T^3$}) \times Id |_{S^1}$$
in $T^4$ viewed as $T^3 \times S^1 = (a \times b \times d) \times c$.  We can then fully depict the surgery manifolds (union $\NT$) by taking the cartesian product of $S^1$ and the resulting 3-manifold, $Y_{m,n}$,  obtained from $T^3$ after the pair of Dehn-surgeries.  This is depicted in figure \ref{fig:XmnUnuTT}(a) where Dehn surgery is performed along push-offs of two of the meridians to the 0-framed Borromean link.
			\begin{figure}[h]
			\begin{center}
			\includegraphics[width=12cm]{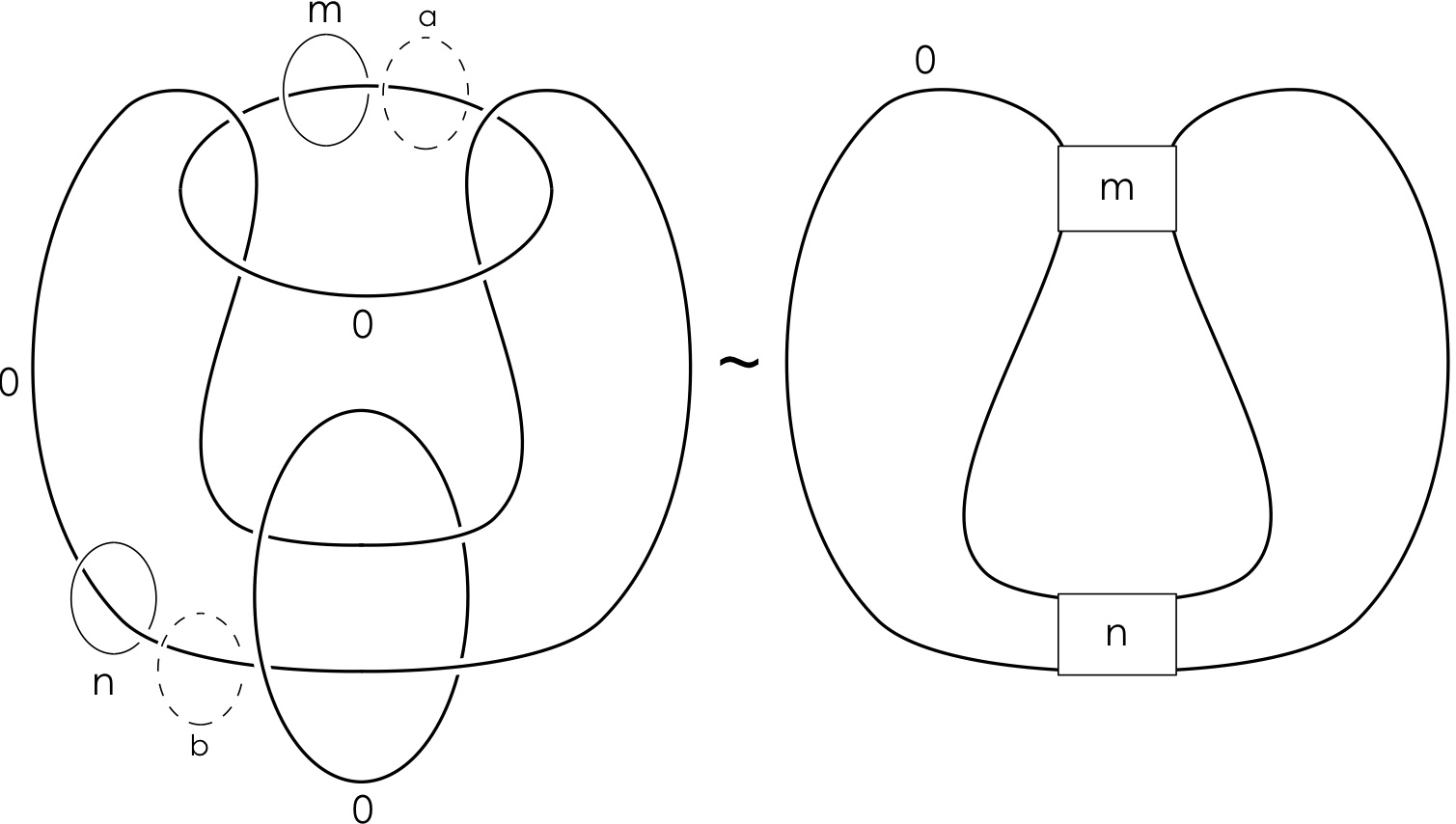}
			\caption{(a) Two surgeries in $T^3$ produce $Y_{m,n}$ \qquad (b) $Y_{m,n}$}
			\label{fig:XmnUnuTT}
			\end{center}
			\end{figure}
			
\noindent Since all of the link coefficients are integral, we can consider $Y_{m,n}$ as the boundary of some 4-manifold, say $U_{m,n}$.  After sliding one of $U_{m,n}$'s 0-framed 2-handles over and then off of the m- and n-framed components and then removing hopf pairs, we obtain figure \ref{fig:XmnUnuTT}(b).  Figure \ref{fig:XmnUnuTT}(b) (viewed again as a 3-manifold surgery diagram) with $m=0$ or $n=0$ is of course $S^2 \times S^1$.  Hence, for any such $(m,n)$ pair, 
$$Y_{m,n} \times S^1 \setminus \NT = S^2 \times S^1 \times S^1 \setminus \NT = A$$
 (see for instance:  \cite{FS10} Lemma 1).  On the other hand, $Y_{m,n} \ncong S^2 \times S^1$ for any choice of a nonzero pair $(m,n)$ since in that case $Y_{m,n}$ is not the unknot.
\end{proof}

Second, by gluing $\NT$ onto any of the surgery manifolds of theorem \ref{thm:totosurgery}, we can recast the union as a $T^3$-bundle over $S^1$, that is, a mapping torus of the form:
$$M_{\phi} \seteq \frac{I \times T^3}{(0,x) \sim (1, \phi (x))}$$
for some diffeomorphism $\phi : T^3 \longrightarrow T^3$.  For instance, $\NT  \cup  \toto = T^4 = T^3 \times S^1 = M_I$, for $I$ the identity map.  Furthermore, $A \cup \NT \approx S^2 \times T^2$, so $A$ does not correspond to a $T^3$-bundle over $S^1$. 

Now mapping tori such as these are precisely the kind that arise in the classic homotopy 4-sphere construction of Cappell-Shaneson  \cite{CS76b, CS76a}.  However, such monodromies $\phi$ obtained here from $\toto$ are \emph{not} restricted to $\SLIII$ and do not satisfy an additional condition of Cappell-Shaneson ($Det(\phi-I) = \pm1$, \cite{CS76b}), so surgery along the $0$-section in any of our mapping tori will not produce one of their homotopy spheres directly.

Now in his Ph.D. thesis (\cite{Na10}), the author of the present paper proved the following:
\begin{thm}\label{thm:MapTori}  Any Cappell-Shaneson mapping torus $M_{\phi}$ can be obtained by some sequence of surgeries along 2-tori in the fiber of the trivial bundle $T^4 = T^3 \times S^1$.  \hfill $\Box$
\end{thm}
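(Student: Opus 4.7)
The plan is to realize each elementary transvection generating $\SLIII$ as the effect of a single 2-torus surgery in a fiber of $T^4 = T^3 \times S^1 = M_I$ on the ambient monodromy, and then compose such surgeries according to a factorization of the given Cappell-Shaneson matrix $\phi$. I would first invoke the classical fact that $\SLIII$ is generated by the six elementary transvections of the form $e_k \mapsto e_k \pm e_j$ (fixing the other two basis vectors, $j \neq k \in \{1,2,3\}$), and write $\phi = \tau_N \cdots \tau_1$ as a word in these generators. Since mapping tori compose under composition of monodromies---equivalently, inserting a diffeomorphism $\tau$ into the monodromy of $M_\phi$ is achieved by cutting along a fiber and regluing by $\tau$, a strictly local operation---it then suffices to exhibit, for each generator $\tau$, a single 2-torus surgery in one fiber of $T^3 \times S^1$ whose net effect is precisely to compose $\tau$ into the ambient monodromy.

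For a fixed generator of the form $[S^1_k] \mapsto [S^1_k] \pm [S^1_j]$, I would set coordinates $T^4 = S^1_a \times S^1_b \times S^1_c \times S^1_w$ and take the 2-torus $T = S^1_i \times S^1_j \times \{c_0\} \times \{w_0\}$ (with $\{i,j,k\} = \{a,b,c\}$) sitting inside the fiber $T^3 \times \{w_0\}$. Its tubular neighborhood in $T^4$ is $T \times D^2$ with the normal disk $D^2$ lying in the $(k,w)$-plane, and I would perform the $\pm 1$-surgery on $T$ in the $[S^1_j]$-direction: in the basis $(\mu, [S^1_i], [S^1_j])$ of $H_1(\partial \nu T)$, the new surgery disk is attached along $\pm\mu + [S^1_j]$. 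The key local claim is that this surgery preserves the projection onto $S^1_w$ as a $T^3$-bundle, while inserting one full Dehn twist of the fiber $T^3$ along the codimension-1 submanifold $T^2 \times \{c_0\} \subset T^3$ in the $[S^1_j]$-direction. On $H_1(T^3;\nZ)$ this Dehn twist sends $[S^1_k] \mapsto [S^1_k] \pm [S^1_j]$ and fixes the remaining generators---exactly the required transvection. Varying $i, j$, and signs realizes all six generators.

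To finish, I would choose $N$ disjoint fiber values $w_1, \ldots, w_N \in S^1$ and perform in the fiber over $w_k$ the local torus surgery realizing $\tau_k$. Since the surgery loci are mutually disjoint, their monodromy effects combine multiplicatively, and the total monodromy of the resulting $T^3$-bundle over $S^1$ is $\tau_N \cdots \tau_1 = \phi$. The resulting 4-manifold is therefore diffeomorphic to $M_\phi$, establishing the theorem.

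The main obstacle is the local claim of the second paragraph: that the specified $\pm 1$-surgery does produce a twisted $T^3$-bundle over $S^1$ with monodromy altered by exactly the advertised Dehn twist. The cleanest verification I envisage is via explicit Kirby calculus on the handle decomposition of $T^2 \times D^2$---in the same spirit as the manipulation used in the proof of Proposition \ref{prop:surgRecast} and depicted in Figure \ref{fig:XmnUnuTT}, only run in reverse---tracking how the $S^1_w$-fibration is twisted across the surgery locus. An essentially equivalent and more structural viewpoint is that torus surgery and Dehn-twist insertion are each realized by regluing the common boundary $T^2 \times S^1$ of a saturated tube $T^2 \times D^2$ via a self-diffeomorphism of $T^2 \times S^1$, so the proof ultimately reduces to identifying the two regluing diffeomorphisms explicitly in coordinates.
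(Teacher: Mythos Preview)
Your proposal is correct and follows essentially the same approach the paper indicates (the full argument being deferred to the author's thesis \cite{Na10}): immediately after the theorem the paper records the key local fact you flag as the main obstacle---that a $(\frac{1}{q})$-surgery on a product 2-torus in the $T^3$-fiber alters the monodromy by left multiplication with the corresponding elementary matrix---citing \cite{Go09} and \cite{GS99} Example 8.2.4, and exhibits the transvections $R_{12}, R_{21}$ explicitly. Your factorization of $\phi\in\SLIII$ into elementary transvections and realization of each by a surgery in a separate fiber is exactly the intended mechanism.
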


We contend however that theorem \ref{thm:MapTori} is still not enough to immediately trivialize even one of the examples $\cS_{(m,n,m',n')}$ by relating these spheres to any of those within the Cappell-Shaneson collection that are now known to be standard (most recently due to Akbulut \cite{Ak09}  and independently Gompf \cite{Go09}).

The correspondences and differences can be seen as follows:
Performing $(\frac{1}{q})$-surgeries along product 2-tori embedded in the $T^3$-fiber of any mapping torus $M_A$ alters its monodromy by left multiplication with the surgery matrix (as in \cite{Go09}, and one dimension lower in \cite{GS99} example 8.2.4).   Unlike proposition \ref{prop:surgRecast} above, this time we factor the trivial fibration $T^4 = T^3 \times S^1$ as $(a \times b \times c) \times S^1 _d$.  Recall that the surgeries on $\toto$ producing the manifolds whose union is a homotopy $S^4$ , say for instance $X_{1,1}$, have surgery curves $\mu + a$ or $\mu + b$, respectively (in the basis $\left\{a, b, \mu \right\}$, $\mu$ the meridian of the torus).  Hence, back within the mapping-torus framework a $+1$-surgery along each of these two tori in these directions would give monodromy-multiplying matrices
$$R_{12} \seteq \begin{pmatrix}
1  &  1 & 0 \\
0  &  1  & 0 \\
0  &  0  & 1
\end{pmatrix}, \ R_{21} \seteq 
\begin{pmatrix}
1  &  0  & 0 \\
1 &  1  & 0 \\
0  &  0  & 1
\end{pmatrix}$$
respectively (now in the basis $\left\{a, b, c \right\}$).

The $X_{1,1}$ surgery manifolds then translate to mapping tori $M_{R_{12}R_{21}I} =  M_A$ where
$$A = \begin{pmatrix}
1  &  1 & 0 \\
0  &  1  & 0 \\
0  &  0  & 1
\end{pmatrix}
\begin{pmatrix}
1  &  0  & 0 \\
1 &  1  & 0 \\
0  &  0  & 1
\end{pmatrix} =
\begin{pmatrix}
2 & 1 & 0 \\
1 & 1 & 0 \\
0 & 0 & 1
\end{pmatrix}
.$$
However, the full range of Cappell-Shaneson mapping tori $M_{\phi}$ also involve surgering in the direction of the third $T^3$ basis factor, so for instance, $b_1 = 1$ in the Cappell-Shaneson case vs. $b_1(X_{m,n})=2$ here.  Moreover, any $X_{m,n}$ with either $m$ or $n \neq \pm1$ no longer even gives a $T^3$-bundle over $S^1$ when $\NT$ is added back in:  
In the case of a $(\pm \frac{1}{1})$-surgery, the diffeomorphism of the surgery torus ``lines up'' with a diffeomorphism of a fiber torus, but in a general $(\frac{m}{1})$-surgery ($m\neq \pm1)$ this correspondence fails.  Thus in general, the complement of $\NT$ in a true Cappell-Shaneson $M_{\phi}$ is \emph{not} an $X_{m,n}$ surgery manifold.  

One single surgery of the $X_{m,n}$ type is enough to derail $\toto$ from the Cappell-Shaneson track.  Note that the surgery is still reversible.  The point is that it is \textit{not} reversible or achievable by torus surgeries obtained from product-framed ($\frac{1}{q})$-surgeries on product tori in the $T^3$ fiber -- the sort used in theorem \ref{thm:MapTori}.

\subsection{Conclusion}
The combined above results should indicate that once again $\toto$ itself remains an important component to a diverse range of 4-manifold constructions, surgeries along tori playing a role in each case.  In fact, a slight alteration of the gluing $\varphi$ in the $\toto$ unions above into a fixed-point-free involution of $\partial (\toto) = \totopar$ allows for the construction of a fixed-point-free involution on the resulting homotopy sphere.  From this, homotopy $\nRPP$'s can then be constructed (given that two identical pairs of surgeries were performed) --- again with $\toto$ playing the role of the fundamental piece to the construction.

Finally, despite the correlations between the two realms, it does not appear that any of the homotopy spheres $\cS_{(m,n,m',n')}$ (parameters in $\nZ^{\neq 0}$) actually relate directly to any member of the Cappell-Shaneson collection (do they even contain fibered 2-spheres?), nor does it seem that Gompf's trick of fishtail-surgery \cite{Go09} would help in trivializing them.  The question remains then:
\begin{question}  Are the $\cS_{(m,n,m',n')}$ homotopy spheres standard?
\end{question}

\vspace{1cm}

\addcontentsline{toc}{chapter}{BIBLIOGRAPHY \vspace{0.5\baselineskip}} \thispagestyle{empty}

\section*{Acknowledgements}
The bulk of this paper originates from the author's Ph.D. thesis \cite{Na10} completed at the University of California Irvine under the supervision of Professor Ron Stern, whose insight, help, and guidance is very much appreciated.  The author would also like to thank Andr\'{a}s Stipsicz for very helpful feedback and for kindly pointing out an error in an earlier draft.

\bibliography{nashbibsub.bib}

\begin{thebibliography}{10}

\bibitem{Ak09}
Selman Akbulut.
\newblock Cappell-shaneson homotopy spheres are standard.
\newblock {\em arXiv:0907.0136v3}, 2009.

\bibitem{ABP08}
Anar Akhmedov, R.~Inanc Baykur, and B.~Doug Park.
\newblock {Constructing infinitely many smooth structures on small
  4-manifolds}.
\newblock {\em Journal of Topology}, 1(2):409--428, 2008.

\bibitem{BK08}
Scott Baldridge and Paul Kirk.
\newblock A symplectic manifold homeomorphic but not diffeomorphic to {$\Bbb
  C\Bbb P\sp 2\#3\overline{\Bbb C\Bbb P}{}\sp 2$}.
\newblock {\em Geom. Topol.}, 12(2):919--940, 2008.

\bibitem{CS76b}
Sylvain~E. Cappell and Julius~L. Shaneson.
\newblock Some new four-manifolds.
\newblock {\em Ann. of Math. (2)}, 104(1):61--72, 1976.

\bibitem{CS76a}
Sylvain~E. Cappell and Julius~L. Shaneson.
\newblock There exist inequivalent knots with the same complement.
\newblock {\em Ann. of Math. (2)}, 103(2):349--353, 1976.

\bibitem{FPS07}
Ronald Fintushel, B.~Doug Park, and Ronald~J. Stern.
\newblock Reverse engineering small 4-manifolds.
\newblock {\em Algebr. Geom. Topol.}, 7:2103--2116, 2007.

\bibitem{FS10}
Ronald Fintushel and Ronald~J. Stern.
\newblock Pinwheels and nullhomologous surgery on 4-manifolds with {$b^+ = 1$}.
\newblock {\em arXiv:1004.3049v1}, 2010.

\bibitem{Go09}
Robert~E. Gompf.
\newblock More cappell-shaneson spheres are standard.
\newblock {\em arXiv:0908.1914v1}, 2009.

\bibitem{GS99}
Robert~E. Gompf and Andr{\'a}s~I. Stipsicz.
\newblock {\em {$4$}-manifolds and {K}irby calculus}, volume~20 of {\em
  Graduate Studies in Mathematics}.
\newblock American Mathematical Society, Providence, RI, 1999.

\bibitem{Ki89}
Robion~C. Kirby.
\newblock {\em The topology of {$4$}-manifolds}, volume 1374 of {\em Lecture
  Notes in Mathematics}.
\newblock Springer-Verlag, Berlin, 1989.

\bibitem{Na10}
D.~Nash.
\newblock Homotopy 4-spheres and surgery on 2-tori.
\newblock {\em Ph.D. Dissertation, University of California Irvine}, 2010.

\bibitem{Ta94}
Clifford~Henry Taubes.
\newblock The {S}eiberg-{W}itten invariants and symplectic forms.
\newblock {\em Math. Res. Lett.}, 1(6):809--822, 1994.

\end{thebibliography}

\end{document}